\newtheorem{thm}{Theorem}
\newtheorem{lem}{Lemma}
\newcommand{\calu}{\mathcal{U}}
\newcommand{\calm}{\mathcal{M}}
\newcommand{\calv}{\mathcal{V}}
\newcommand{\cale}{\mathbb{E}}
\DeclareMathOperator{\spn}{span}
\title{The vector graph and the chromatic number of the plane, or how NOT to prove that $\chi(\mathbb{E}^2)>4$}
\author{Jeremy F. Alm\\ Illinois College\\ Jacksonville, IL 62650\\ \texttt{alm.academic@gmail.com} \and Jacob Manske\\ Epic Systems Corporation\\ Verona, Wisconsin 53593\\ \texttt{jmanske@gmail.com}}
\begin{document}
\maketitle


\begin{abstract}

The chromatic number $\chi\left(\cale^2\right)$ of the plane is known to be some integer between 4 and 7, inclusive. We prove a limiting result that says, roughly, that one cannot increase the lower bound on $\chi\left(\cale^2\right)$ by pasting  Moser spindles together --- even countably many --- by taking translations by $\mathbb{Z}$-linear combinations of a certain set of vectors.

\end{abstract}

\section{Introduction and motivation}
Let $n$ be a positive integer, and let $[n] = \left\{1,2,\ldots,n\right\}$. Let $\cale^{n}$ denote the graph whose vertex set is $\mathbb{R}^{n}$, where two vertices are adjacent if and only if they are distance 1 apart using the standard Euclidean metric.
For a positive integer $k$, a $k$-coloring of a graph $G$ is a function $f: V(G) \to [k]$. A $k$-coloring is \emph{proper} if whenever $x$ and $y$ are adjacent in $G$, $f(x) \neq f(y)$.
The \emph{chromatic number} of a graph $G$ (which we shall denote by $\chi\left(G\right)$) is the smallest positive integer $k$ such that there exists a proper $k$-coloring of $G$.

It is known that
\begin{equation}\label{basicbounds}
4 \leq \chi\left(\cale^2\right) \leq 7.
\end{equation}
These bounds can be found summarized in the text by Soifer~\cite{soiferbook}. The lower bound uses the famous 7-vertex unit distance graph known as the \emph{Moser Spindle}, which we will discuss below. The correct value of $\chi\left(\cale^2\right)$ may depend on the axioms of set theory, as shown by Shelah and Soifer in~\cite{shelahsoifer}. In particular, they give a graph $G$ such that
\begin{enumerate}
\item[(i.)] in ZFC, $\chi(G)=2$;
\item[(ii.)] in ZF + countable choice + LM, $\chi(G)\geq \aleph_1$.
\end{enumerate}
ZF denotes the axioms of Zermelo-Fraenkel set theory, ZFC denotes the axioms of Zermelo-Frankel set theory together with the axiom of choice, and  LM denotes the axiom that says that all subsets of the plane are Lebesgue measurable.

Also, $\chi\left(\cale^{3}\right)$ has been widely studied. A lower bound for $\chi\left(\cale^{3}\right)$ was achieved by Ra{\u\i}ski{\u\i} in~\cite{raiskii}, when he showed that $\chi\left(\cale^{n}\right) \geq n+2$, so $\chi\left(\cale^{3}\right)\geq 5$. Currently, the best lower bound for $\chi \left(\cale^{3}\right)$ is due to Nechushtan in~\cite{nechushtan}, who showed that $\chi\left(\cale^{3}\right) \geq 6$. An upper bound of $\chi\left(\cale^{3}\right) \leq 18$ was achieved by   Coulson in~\cite{coulson18}; he later improved this result to $\chi\left(\cale^{3}\right) \leq 15$  in~\cite{coulson15}. In summary, the best bounds for $\chi\left(\cale^{3}\right)$ stand at
\[ 6 \leq \chi\left(\cale^{3}\right) \leq 15.\]

\section{The vector graph and main results}


We introduce the \emph{vector graph} $\calv_{\calu}$, a subgraph of $\cale^2$, that depends on a set of unit vectors $\calu$ from $\mathbb{R}^{2}$.  Given a set $\calu$ of unit vectors, we define \[\spn \calu =\left\{a_{1}u_{1} +\cdots + a_{n}u_{n} : n \in \mathbb{N}, a_{i} \in \mathbb{Z}, u_{i}\in \calu\right\}.\]
The vertices of $\calv_{\calu}$ are the vectors in $\spn \calu$ interpreted as ordered pairs of real numbers.
Adjacency in $\calv_{\calu}$ is given by $u \sim v$ if and only if $u - v \in \calu$ or $v - u \in \calu$. For example, if we take $\calu = \left\{(1,0),(0,1)\right\}$, then $\calv_{\calu}$ is the (Euclidean) unit distance graph whose vertex set is $\mathbb{Z}^{2}$.

For a vector $v$ from $\mathbb{R}^{2}$, let $r_{\theta}\left(v\right)$ denote the counterclockwise rotation of the vector $v$ by $\theta$ radians. Set $\alpha = \arccos\left(\dfrac{1}{2\sqrt{3}}\right) - \dfrac{\pi}{6}$ and $\beta = \alpha + \dfrac{\pi}{3}$. Let $u$ be any unit vector from $\mathbb{R}^{2}$ and define a set of vectors $\calm$ as
\begin{equation}\label{moservectorset}
\calm = \left\{u, r_{\alpha}(u), r_{-\alpha}(u), r_{\beta}(u), r_{-\beta}(u), r_{\alpha}(u) - r_{\beta}(u), r_{-\alpha}(u)-r_{-\beta}(u)\right\}.
\end{equation}
The reason for choosing these particular angles $\alpha$ and $\beta$ is so that $\calv_{\calm}$ contains a copy of the Moser spindle. See Figure \ref{moservectorfig} for a depiction of a subgraph of $\calv_{\calm}$ and Figure \ref{fig:section} for seven copies of the spindle pasted together, which begins to illustrate the complexity of $\calv_{\calm}$.

\begin{figure}[H]
\begin{center}
\includegraphics[width=2in]{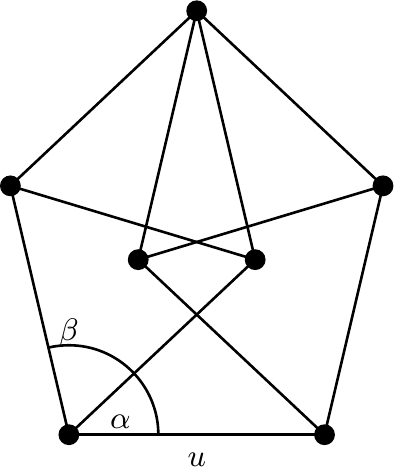}
\end{center}
\caption{A subgraph of $\calv_{\calm}$ indicating the angles $\alpha$ and $\beta$ and a unit vector $u$.}
\label{moservectorfig}
\end{figure}

\begin{figure}[H]
\begin{center}
\includegraphics[width=2in]{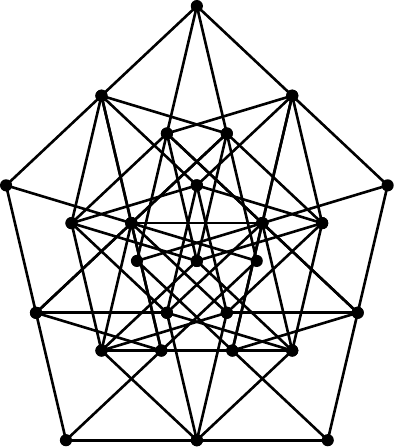}
\end{center}
\caption{A subgraph of $\calv_\calm$.}
\label{fig:section}
\end{figure}

\begin{lem}\label{thm:moser}
For all $v_1\in\calv_{\calm}$ and for all vertices $w$ in the  Moser Spindle, there exist $v_2,\ldots,v_7\in \calv_\calm$ such that the subgraph of $\calv_\calm$ induced by $\{v_1,\ldots,v_7\}$ is isomorphic to the Spindle via an isomorphism that carries $v_1$ onto $w$.
\end{lem}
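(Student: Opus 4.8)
The plan is to reduce the statement to the existence of a \emph{single} copy of the spindle and then slide it into the required position using translations. First I would record that $\spn\calm$ is an additive subgroup of $\mathbb{R}^2$, so for every $t\in\spn\calm$ the translation $T_t\colon x\mapsto x+t$ is a bijection of the vertex set $\spn\calm$ onto itself, and since $(x+t)-(y+t)=x-y$ it preserves the adjacency relation $x\sim y\iff x-y\in\calm\cup(-\calm)$. Hence each $T_t$ is an automorphism of $\calv_\calm$. In particular $\calv_\calm$ is vertex-transitive, but I want slightly more: automorphisms carry induced subgraphs to isomorphic induced subgraphs, so one explicit spindle can be translated so that any prescribed one of its vertices lands on any prescribed target vertex $v_1$.

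Concretely, suppose I have produced a $7$-element set $C_0\subseteq\spn\calm$ whose induced subgraph is isomorphic to the spindle, and I fix an isomorphism $\iota$ from the induced subgraph on $C_0$ onto the Spindle. Given $v_1\in\calv_\calm$ and a spindle-vertex $w$, let $c=\iota^{-1}(w)\in C_0$ and set $t=v_1-c$. Because $v_1,c\in\spn\calm$, we have $t\in\spn\calm$, so $T_t$ is an automorphism; then $T_t(C_0)\ni v_1$ again induces a spindle, and composing $\iota$ with the restriction of $T_t^{-1}$ gives an isomorphism from this induced subgraph onto the Spindle sending $v_1=T_t(c)\mapsto\iota(c)=w$. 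Taking $\{v_2,\dots,v_7\}=T_t(C_0)\setminus\{v_1\}$ then finishes the argument, so the whole lemma collapses to exhibiting one induced copy of the spindle in $\calv_\calm$.

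For that copy I would take the apex at the origin and the seven vertices
\[ C_0=\{\,0,\ r_\alpha(u),\ r_\beta(u),\ r_\alpha(u)+r_\beta(u),\ -r_{-\beta}(u),\ -r_{-\alpha}(u),\ -r_{-\alpha}(u)-r_{-\beta}(u)\,\}. \]
The two rhombi are $\{0,r_\alpha(u),r_\beta(u),r_\alpha(u)+r_\beta(u)\}$ and $\{0,-r_{-\beta}(u),-r_{-\alpha}(u),-r_{-\alpha}(u)-r_{-\beta}(u)\}$; since $\beta-\alpha=\pi/3$, each consists of two unit equilateral triangles, and all eleven spindle edges are differences lying in $\calm\cup(-\calm)$ (the sides $r_{\pm\alpha}(u),r_{\pm\beta}(u)$, the short diagonals $-(r_\alpha(u)-r_\beta(u))$ and $-(r_{-\alpha}(u)-r_{-\beta}(u))$, and so on). The only delicate adjacency is the edge joining the two far tips, and this is exactly what the choice of $\alpha$ is engineered to produce: one computes
\[ \bigl(r_\alpha(u)+r_\beta(u)\bigr)-\bigl(-r_{-\alpha}(u)-r_{-\beta}(u)\bigr)=2(\cos\alpha+\cos\beta)\,u=u, \]
because $\cos\alpha+\cos\beta=2\cos\!\bigl(\alpha+\tfrac\pi6\bigr)\cos\tfrac\pi6=2\cdot\tfrac{1}{2\sqrt3}\cdot\tfrac{\sqrt3}{2}=\tfrac12$, using $\alpha+\tfrac\pi6=\arccos\!\bigl(\tfrac{1}{2\sqrt3}\bigr)$.

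I expect the main obstacle to be verifying that $C_0$ induces the spindle and \emph{nothing more}, i.e.\ checking that the $\binom{7}{2}-11=10$ remaining pairs are non-adjacent. For each such pair I would compute the difference and show its length is not $1$; the lengths that arise are $\sqrt3$, $1/\sqrt3$, $2\cos\alpha$, $2\lvert\cos\beta\rvert$, and two further values, none equal to $1$, so none of these differences can lie in $\calm\cup(-\calm)$. This non-adjacency verification is purely computational but is the genuine content of the construction, as it is precisely where the value of $\alpha$ matters. Once this single induced copy is in hand, the translation argument of the first two paragraphs delivers the stated copies for every $v_1$ and every spindle-vertex $w$ simultaneously.
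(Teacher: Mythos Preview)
Your argument is correct and is essentially the same translation idea the paper uses, but packaged more cleanly and more completely. The paper fixes a reference spindle (given by a figure) and then, implicitly using that the Moser spindle has three vertex-orbits under its own automorphism group, produces three translated copies placing $v_1$ at a representative of each orbit; you instead observe once that $\spn\calm$ is a group, so every $T_t$ is a graph automorphism, and hence a \emph{single} explicit spindle $C_0$ can be slid so that \emph{any} of its seven vertices lands on $v_1$. This avoids any appeal to the spindle's symmetry and yields a uniform one-line reduction. You also supply two things the paper leaves to the figure: the computation $\cos\alpha+\cos\beta=\tfrac12$ that makes the tip edge equal to $u$, and the remark that one must check the ten non-edges to know the induced subgraph is exactly the spindle. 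For that last check your length argument is sound, since every vector in $\calm$ is a unit vector (including $r_{\pm\alpha}(u)-r_{\pm\beta}(u)$, as $\beta-\alpha=\pi/3$), so a difference of length $\neq 1$ cannot lie in $\calm\cup(-\calm)$.
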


\begin{proof}
Let $v_1$ be a vector in  $\calv_{\calm}$. We are
done if we can show that there are three Moser Spindles in  $\calv_{\calm}$ such that $v_1$ occurs
as the bottom left vertex in one spindle, the left most vertex in a second spindle,
and the top most vertex in a third spindle (see Figure \ref{fig:labels}). To see that there are
three such spindles, add $v_1$ to all coordinates of the spindle in Figure \ref{fig:labels}. This gives a copy of the
Moser spindle with $v_1$ as the bottom left vertex. Similarly, adding $v_1 - r_\beta(u)$ and
$v_1 - (r_\alpha(u) + r_\beta(u))$ gives spindles with $v_1$ as the left most vertex and the top most
vertex, respectively.

\end{proof}

\begin{figure}[H]
\begin{center}
\includegraphics[width=3in]{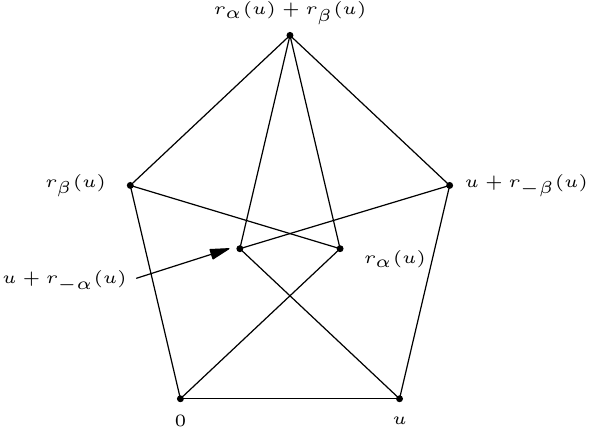}
\end{center}
\caption{A subgraph of $\calv_\calm$ with vertices labelled with vectors}
\label{fig:labels}
\end{figure}


Since the chromatic number of the Moser spindle is 4, we have $\chi\left(\calv_{\calm}\right) \geq 4$. One might think that the vector graph $\calv_\calm$ might not be 4-colorable, in the same way that the Spindle, which is made up of triangles pasted together, is not 3-colorable. In fact, in Theorem \ref{thm:moservector}, we show that $\calv_{\calm}$ is 4-colorable.
\begin{thm}\label{thm:moservector} For any unit vector $u$ in $\mathbb{R}^{2}$, if $\calv_\calm$ is the vector graph given by (\ref{moservectorset}), then 
$\chi\left(\calv_{\calm}\right) = 4$.
\end{thm}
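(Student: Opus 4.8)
The lower bound $\chi(\calv_\calm) \ge 4$ is already in hand, since $\calv_\calm$ contains a Moser spindle; so the whole task is to show $\chi(\calv_\calm) \le 4$ by producing a proper $4$-coloring. The plan is to realize the four colors as an abelian group $A$ of order $4$ and to build a \emph{group homomorphism} $\phi \colon \spn\calm \to A$ with the single property that $\phi(m) \ne 0$ for every $m \in \calm$. Coloring each vertex $x$ by $\phi(x)$ is then automatically proper: if $x \sim y$ then $x - y \in \calm \cup (-\calm)$, and since $\phi$ is a homomorphism into an abelian group, $\phi(x) - \phi(y) = \pm\phi(x-y) \ne 0$, so $x$ and $y$ receive different colors. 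Thus everything reduces to (a) understanding $\spn\calm$ as an abelian group and (b) choosing the values of $\phi$ on generators compatibly both with the relations and with the nonvanishing requirement.

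To carry out (a), identify $\mathbb{R}^2$ with $\mathbb{C}$; since $\chi$ is rotation-invariant we may take $u = 1$. Put $\omega = e^{i\pi/3}$ and $z = e^{i\alpha}$. Because $\beta = \alpha + \pi/3$ we have $r_\beta(u) = \omega z$ and $r_{-\beta}(u) = \omega^{-1}z^{-1}$, and using $1 - \omega = \omega^{-1}$ and $1 - \omega^{-1} = \omega$ the two ``difference'' vectors become $r_\alpha(u) - r_\beta(u) = \omega^{-1}z$ and $r_{-\alpha}(u) - r_{-\beta}(u) = \omega z^{-1}$. Hence
\[
\calm = \{\,1,\ z,\ z^{-1},\ \omega z,\ \omega^{-1}z^{-1},\ \omega^{-1}z,\ \omega z^{-1}\,\},
\]
and $\spn\calm$ is generated as an abelian group by the five elements $1, z, z^{-1}, \omega z, \omega z^{-1}$, the two $\omega^{-1}$-vectors being integer combinations of these. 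As motivation for the target group: $\omega$ is a unit in the ring of Eisenstein integers $\mathbb{Z}[\omega]$, in which $2$ is inert, so the residue field $\mathbb{Z}[\omega]/(2)$ is the field $\mathbb{F}_4$ of four elements, which is exactly the $A$ I will use.

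The crux of the argument --- and the step I expect to be the main obstacle --- is pinning down the $\mathbb{Z}$-linear relations among these five generators, since $\phi$ must respect them. Solving the defining equation $\cos(\alpha + \pi/6) = 1/(2\sqrt3)$ shows that $\cos\alpha$ is a root of $6x^2 - 3x - 1 = 0$, so that every generator lies in $\mathbb{Q}(i,\sqrt3,\sqrt{11})$ and is expressible in the $\mathbb{Q}$-basis $\{1, i\sqrt3, i\sqrt{11}, \sqrt{33}\}$. A direct computation of these coordinate vectors should show that their $\mathbb{Q}$-span is exactly $4$-dimensional; hence $\spn\calm$ is free abelian of rank $4$ and its lattice of relations has rank $1$. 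I expect this computation to produce the single primitive relation
\[
-1 + z + 2z^{-1} + \omega z - \omega z^{-1} = 0,
\]
and the essential point to verify carefully is that $1, z, z^{-1}, \omega z$ are genuinely independent, so that this is the \emph{only} relation (up to sign) and no hidden coincidence imposes a further constraint on $\phi$.

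Finally I would assemble $\phi \colon \spn\calm \to A = \mathbb{F}_4 = \{0,1,\omega,\omega^2\}$ (with $\omega^2 = \omega + 1$ and characteristic $2$). A homomorphism into this group is well-defined exactly when it annihilates the relation above; in characteristic $2$ the relation collapses to $\phi(\omega z^{-1}) = \phi(1) + \phi(z) + \phi(\omega z)$, so I am free to assign $\phi$ on $1, z, z^{-1}, \omega z$ and let $\phi(\omega z^{-1})$ be forced. Taking $\phi(1) = \phi(z) = \phi(z^{-1}) = 1$ and $\phi(\omega z) = \omega$ forces $\phi(\omega z^{-1}) = \omega$, whereupon the seven vectors of $\calm$ map to $1, 1, 1, \omega, \omega^2, \omega^2, \omega$ respectively --- all nonzero. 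This $\phi$ therefore yields a proper $4$-coloring, giving $\chi(\calv_\calm) \le 4$; combined with the spindle lower bound, $\chi(\calv_\calm) = 4$.
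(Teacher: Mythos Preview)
Your proof is correct and follows essentially the same strategy as the paper's: observe that $\spn\calm$ is free abelian of rank~$4$, then exhibit a group homomorphism into a $4$-element abelian group that sends no element of $\calm$ to zero. The only real difference is cosmetic --- the paper maps into $\mathbb{Z}/4\mathbb{Z}$ via $a\alpha + b\overline{\alpha} + c\beta + d\overline{\beta} \mapsto a + 3b + 2c + d \pmod 4$ after directly taking $\{\alpha,\overline{\alpha},\beta,\overline{\beta}\}$ as a free $\mathbb{Z}$-basis (and using $u = \alpha + \overline{\alpha} + \beta + \overline{\beta}$, which is your relation rewritten), whereas you route through five generators, one relation, and the additive group of $\mathbb{F}_4$.
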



\begin{proof}[Proof of Theorem \ref{thm:moservector}] Let $u$ be any unit vector in $\mathbb{R}^{2}$, and let $\calm$ be the set described in (\ref{moservectorset}). Since the chromatic number of the Moser Spindle is $4$, we need only to show that $\chi\left(\calv_{\calm}\right) \leq 4$.

For ease in notation, and when there is no confusion, we let $\alpha = r_{\alpha}(u)$, $\overline{\alpha} = r_{-\alpha}(u)$, $\beta = r_{\beta}(u)$, and $\overline{\beta} = r_{-\beta}(u)$. Hence, we may write
\[\calm = \left\{u, \alpha, \overline{\alpha}, \beta, \overline{\beta}, \alpha - \beta, \overline{\alpha} - \overline{\beta}\right\}.\]
Notice that the set of vectors $\left\{\alpha, \overline{\alpha}, \beta, \overline{\beta}\right\}$ is linearly independent over $\mathbb{Z}$. As such, for all $v \in \calv_{\calm}$ there is a unique $\mathbb{Z}$-linear combination such that $v= a\alpha +b\overline{\alpha}+ c\beta + d\overline{\beta}$.  Define $f: \calv_{\calm} \to \mathbb{Z}_{4}$ by 
\[
    f(v) = a + 3b + 2c + d \pmod{4}.
\]


We show now $f$ is a proper 4-coloring. Let $x$ and $y$ be vertices from $\calv_{\calm}$ with $x \sim y$. Suppose $f(x) = f(y)$. Hence $f(x - y) = 0$, as $f$ is linear. Since $x \sim y$, $x - y \in \calu$, so we need only check that no vector in $\calm$ gets mapped to 0.

Since $u = \alpha + \beta + \overline{\alpha} + \overline{\beta}$, $f(u) = 1 + 2 + 3 + 1 \equiv 3\ \text{(mod 4)}$. We also have $f(\alpha - \beta)  \equiv  1 - 2  \equiv  3\ \text{(mod 4)}$ and $f\left(\overline{\alpha} - \overline{\beta}\right)  \equiv  3 - 1  \equiv  2\ \text{(mod 4)}$, so $f$ is a proper 4-coloring of $\calv_{\calm}$ and the proof is complete.
\end{proof}



\section{Future work/discussion}

It could be that proper 4-colorings of the plane (if they exist) occupy a place analogous to that of non-principal ultrafilters\footnote{The existence of non-principal ultrafilters is proved as follows: apply Zorn's Lemma to the poset (under containment) of all filters over an infinite set that contain all of the co-finite subsets; the resulting maximal element in this poset is an ultrafilter that is necessarily non-principal.
}; we could never write one down.  For instance, one could imagine a proof that all finite unit-distance graphs are 4-colorable; this would imply (by Erd\H{o}s-DeBruijn \cite{dBE}) that a 4-coloring of the plane exists, but it certainly would not give us the means to write one down.

It could be that the presence or absence of the axiom of choice plays a  central role. Shelah and Soifer \cite{shelahsoifer}, relying on a result of Falconer \cite{Falconer81}, prove that if all finite unit distance graphs are 4-colorable, then even though  $\chi(\mathbb{E}^2)=4$ in ZFC, in ZF + dependent choice + LM we have that $\chi(\mathbb{E}^2)\geq 5$.

\section{Acknowledgements}
We thank the referees for helpful comments toward improving the exposition of the paper.



\end{document}